\def\rompar(#1){\textup(#1\textup)}    % usage: \rompar(...)
\def\xexp(#1){e^{#1}}
\newcommand\ceil[1]{\lceil#1\rceil}
\newcommand{\refT}[1]{Theorem~\ref{#1}}
\newcommand{\refL}[1]{Lemma~\ref{#1}}
\newcommand{\refS}[1]{Section~\ref{#1}}
\newcommand{\refApp}[1]{Appendix~\ref{#1}}
\newcommand{\refCon}[1]{Conjecture~\ref{#1}}
\newcommand{\indic}[1]{\mathbbm{1}_{\{{#1}\}}}
\newcommand{\cPr}{\mathbb{P}}
\newcommand{\E}{\mathbb{E}}
\newcommand{\Bin}{\operatorname{Bin}}
\newcommand{\Var}{\operatorname{Var}}
\newcommand{\cL}{C}
\newcommand{\cQm}{\mathcal{F}_M}
\newcommand{\cQe}{\mathcal{F}_E}
\newcommand{\cxi}{\xi}
\newcommand{\cF}{\mathcal{F}}
\newtheorem{theorem}{Theorem}[]
\newtheorem{remark}[theorem]{Remark}
\newtheorem{lemma}[theorem]{Lemma}
\newtheorem{conjecture}[theorem]{Conjecture}
\title{Short rainbow cycles for families of matchings and triangles}
\date{November 20, 2022; revised March 31, 2024}
\author{He Guo\thanks{Faculty of Mathematics, Technion, Haifa~32000, Israel. E-mail: {\tt hguo@campus.technion.ac.il}.}}
\begin{document}

\maketitle

\begin{abstract}
A generalization of the famous Caccetta--H\"aggkvist conjecture, suggested by Aharoni~\cite{ADH2019}, is that any family $\mathcal{F}=(F_1, \ldots,F_n)$ of sets of edges in~$K_n$, each of size~$k$, has a rainbow cycle of length at most $\lceil \frac{n}{k}\rceil$. In~\cite{AharoniGuo, ABCGZ2022} it was shown that asymptotically this can be improved to~$O(\log n)$ if all sets are matchings of size 2, or all are triangles. We show that the same is true in the mixed case, i.e., if each~$F_i$ is either a matching of size 2 or a triangle. We also study the case that each~$F_i$ is a matching of size 2 or a single edge, or each~$F_i$ is a triangle or a single edge, and in each of these cases we determine the threshold proportion between the types, beyond which the rainbow girth goes from linear to logarithmic.
\end{abstract}
\textbf{Keywords:} rainbow girth, generalized Caccetta--H\"aggkvist conjecture, short rainbow cycles

\section{Introduction}
The \emph{directed girth}~$dgirth(G)$ of a digraph~$G$ is the minimal length of a directed cycle in it ($\infty$  if there is no directed cycle). 
A famous conjecture of  Caccetta and H\"{a}ggkvist  \cite{CaccettaHaggkvist} (below --- CHC) states that any digraph $G$ on~$n$ vertices satisfies $dgirth(G) \le \lceil \frac{n}{\delta^+(G)}\rceil$, where $\delta^+(G)$ is the minimum out-degree of~$G$.  See~\cite{ChvatalSzemeredi, Hamidoune, HoangReed, Nishimura} for progress on this problem. In particular it has been shown that 

(a) The CHC is true if $n\ge 2\delta^+(G)^2-3\delta^+(G)+1$~\cite{Shen1},  and

(b)  $dgirth(G)\le n/\delta^+(G)+73$ for all~$G$~\cite{Shen2}.

In~\cite{ADH2019} a possible generalization of CHC was suggested. 
Given a family $\cF=(F_1, \ldots,F_m)$ of sets of edges, 
a set~$F$ of edges is said to be {\em rainbow} for~$\cF$ if each of the edges in~$F$ is taken from a distinct~$F_i$ (if  the sets $F_i$ are disjoint, this means that $|F\cap F_i|\le 1$ for each~$i$).  The {\em rainbow girth}~$rgirth(\cF)$ of~$\cF$ is the minimal length of a rainbow cycle with respect to~$\cF$.

\begin{conjecture}\label{con:rainbow}
For any family $\cF=(F_1, \ldots,F_n)$ of subsets of~$E(K_n)$ such that $|F_i|=k$ for each~$1\le i\le n$, we have $rgirth(\cF)\le \ceil{n/k}$. 
\end{conjecture}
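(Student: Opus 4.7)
The plan is to attempt a reduction to the Caccetta--H\"aggkvist conjecture (CHC). Given $\cF=(F_1,\ldots,F_n)$ with $|F_i|=k$, the idea is to build an auxiliary digraph $D$ on $V(K_n)$ whose arcs are colored by $[n]$ so that every short directed cycle of $D$ is automatically rainbow. For each color $i$ I would pick a pivot vertex $v_i$ together with a subfamily $F_i'\subseteq F_i$ of edges incident to $v_i$, and orient each $e\in F_i'$ away from $v_i$ with color $i$. If the pivots can be chosen injectively (so distinct colors have distinct pivots) and each $v_i$ achieves out-degree $\Omega(k)$, then CHC, or Shen's approximate form $dgirth(D)\le n/\delta^+(D)+73$, would yield a directed cycle of the required length, and its arcs are automatically rainbow because distinct colors sit at distinct pivots.

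The main obstacle is that no such pivot assignment exists in general. If each $F_i$ is a matching, no vertex captures more than one edge of $F_i$, so the orientation has out-degree at most~$1$ and the reduction is useless. This is why the reduction to CHC cannot succeed without exploiting the aggregate structure of $\cF$, and it explains in hindsight why the special cases resolved in \cite{AharoniGuo, ABCGZ2022} require genuinely different methods and produce only $O(\log n)$ bounds rather than the conjectured $\ceil{n/k}$.

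I would therefore fall back on a rainbow BFS with amortization. Fix a root $v_0$, iteratively grow a BFS tree by appending edges from colors not yet consumed, and let $S_t$ denote the reached set after $t$ levels. Either $|S_t|$ grows multiplicatively and reaches $V(K_n)$ within $O(\log n)$ levels, in which case an unused closing edge completes a short rainbow cycle; or growth stalls at some $|S|<n/k$, forcing a positive proportion of the colors to have all of their $k$ edges inside $\binom{S}{2}$. The hard part will be the stalling case: one must extract from many $k$-edge color classes confined to a small set $S$ a rainbow cycle of length at most $|S|\le n/k$. This is essentially the original conjecture recursed on a subproblem, which is why I expect the plan to deliver at best an $O((n/k)\log n)$ bound; closing the gap to the conjectured $\ceil{n/k}$ appears to demand ideas beyond the rainbow-matroid and topological techniques that currently underlie the known logarithmic results, and is the reason the full conjecture remains out of reach.
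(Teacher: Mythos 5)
The statement you were asked about is Conjecture~\ref{con:rainbow}, and the paper does not prove it: it is an open conjecture, generalizing the Caccetta--H\"aggkvist conjecture, which itself remains open. The paper only records partial progress from the literature (the case $k=2$ from~\cite{DDFGGHMM2021}, the case of sets of size $1$ or $2$ from~\cite{ABCGZ2022}, and the $O(n/k)$ approximation from~\cite{HS2022}) and then proves logarithmic bounds on the rainbow girth in structured special cases (matchings of size $2$, triangles, and mixtures with single edges). So there is no ``paper's own proof'' to compare your write-up against, and you were right not to claim one.

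Your discussion of the obstacles is essentially sound. The pivot-orientation reduction to CHC does work when the color classes are stars (this is precisely why star color classes give the known extremal examples, as the paper notes), and your observation that it collapses for matchings --- each vertex sees at most one edge of $F_i$, so any such orientation has out-degree at most $1$ --- correctly identifies why the reduction fails in general. Your BFS sketch also correctly locates the difficulty: when growth stalls, you are handed back an instance of the same problem on a smaller vertex set with no gain, so the argument does not close without a genuinely new idea. One small caveat: the known $O(\log n)$ results for size-$2$ matchings and for triangles are not obtained by ``rainbow-matroid and topological techniques'' --- in this paper and in~\cite{AharoniGuo, ABCGZ2022} they come from random vertex sampling combined with the Bollob\'as--Szemer\'edi bound on the girth of graphs with $N+k$ edges on $N$ vertices (Theorem~\ref{thm:girthsparsegraph}). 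That sampling-plus-sparse-girth machinery naturally produces $O(\log n)$ and does not approach $\lceil n/k\rceil$, which is consistent with your assessment that the full conjecture requires different ideas.
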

We may clearly assume that the sets $F_i$ are disjoint, since otherwise there is a rainbow digon, meaning that the rainbow girth is 2. 
Given~$\cF=(F_1, \ldots,F_m)$ with $F_i\neq\emptyset$ for each $i\in [m]$ and $\cup_{i=1}^m F_i=E(G)$ for some graph~$G$, we shall refer to~$\cF$ as an \emph{edge coloring} of~$G$, the indices $i\in [m]$ as \emph{colors}, the sets $F_i$ as {\em color classes}, and $rgirth(\cF)$ as the \emph{rainbow girth} of~$G$ with respect to the edge coloring~$\cF$. %We always assume that every color class $F_i$ is non-empty. 

Devos et. al.~\cite{DDFGGHMM2021} proved \refCon{con:rainbow} for $k=2$. In~\cite{ABCGZ2022} a stronger version of the conjecture was proved, in which the sets~$F_i$ are of size $1$ or $2$. 
In~\cite{HS2022} it was shown that the order of magnitude is right: there exists a constant~$C>0$ such that for any~$k, n$ and~$\cF$ satisfying the assumption of~\refCon{con:rainbow}, we have $rgirth(\cF)\le Cn/k$.
An  explanation  why~\refCon{con:rainbow} implies the CHC can be found in~\cite{AharoniGuo} and~\cite{ABCGZ2022}.

All known extreme examples for~\refCon{con:rainbow} are obtained from those of the CHC, taking the color classes as stars.
This suggests looking at  the  case when the sets of edges are not stars, and trying to improve the upper bound on the girth.
Indeed, in~\cite{AharoniGuo}, it is proved that if an $n$-vertex graph is edge-colored by~$n$ colors such that each color class is a matching of size 2, then the rainbow girth is $O(\log n)$, asymptotically improving the conclusion of the conjecture.

A set of edges  not containing a matching of size 2 is either a star or a triangle, hence the next interesting case is that of families of triangles. In~\cite{ABCGZ2022}, it is proved that a family of $n$ triangles in $K_n$ has rainbow girth~$O(\log n)$. Furthermore, it was shown there that~$\log n$ is the right order of magnitude: an $n$-vertex graph is constructed, consisting of~$n$ edge-disjoint triangles whose rainbow girth is $\Omega(\log n)$.

In this note we fine-tune the above results, 
by showing that the rainbow girth is~$O(\log n)$ in the mixed case that each~$F_i$ is either a matching of size 2 or a triangle. We also study the case that each~$F_i$ is a matching of size 2 or a single edge, or each~$F_i$ is a triangle or a single edge, and in each of these cases we determine the threshold proportion between the types, beyond which the rainbow girth goes from linear to logarithmic.

\section{Graph theoretical and probabilistic tools}\label{sec:tools}

As in~\cite{HS2022, AharoniGuo},
a key  ingredient in the proofs is  a result by Bollob\'as and Szemer\'edi~\cite{BS2002} on the girth of sparse graphs.

\begin{theorem}\label{thm:girthsparsegraph}
For $N\ge 4$ and $k\ge 2$, every $N$-vertex graph with $N+k$ edges has girth at most
\[\frac{2(N+k)}{3 k}(\log_2 k+\log_2\log_2 k+4). \]
\end{theorem}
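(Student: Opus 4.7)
The plan has two stages: first reduce~$G$ to a sparse weighted multigraph of minimum degree at least~$3$, then locate a short (in weighted length) cycle there. In the first stage I would iteratively delete vertices of degree at most~$1$ together with their incident edge, and suppress vertices of degree~$2$ by replacing the two incident edges with a single new edge whose weight equals the sum of the two weights (weights start at~$1$). Both operations preserve cycles---each cycle of~$G$ corresponds bijectively to a cycle of the resulting weighted multigraph~$H$ of equal length, measured as the sum of the edge weights---and preserve the cyclomatic number $|E|-|V|+c$. Passing first to the $2$-core (which contains every cycle of~$G$) and then suppressing degree-$2$ vertices, one obtains~$H$ with $n'$ vertices and $m'$ edges satisfying $m'-n'=k$, together with $3n'\le 2m'$ from the minimum-degree-$3$ condition, hence $n'\le 2k$ and $m'\le 3k$. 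The total edge-weight of~$H$ is at most $N+k$.

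For the second stage, I would hunt for a cycle of~$H$ whose edges all have weight below a carefully chosen threshold~$W$. Let~$H_W$ be the sub-multigraph on edges of weight at most~$W$; edges of weight~$>W$ contribute at least $W\cdot|E(H)\setminus E(H_W)|$ to the total weight, so $|E(H_W)|\ge m'-(N+k)/W$. Iteratively pruning vertices of degree~$\le 1$ in~$H_W$ (removing at most one edge per pruned vertex) produces a sub-multigraph~$H_W^{\ast}$ of minimum degree~$\ge 2$; provided the combined thresholding and pruning do not eat up all of the cyclomatic excess of~$H$, $H_W^{\ast}$ still contains a cycle. A Moore-type bound applied to~$H_W^{\ast}$---which has at most $n'\le 2k$ vertices and whose average degree can be bounded below in terms of~$W$---yields a cycle of at most~$L$ edges, each of weight~$\le W$. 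Lifted back through the suppressions to~$G$, this is a cycle of length at most~$WL$.

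The hard part will be tuning~$W$ so that the product~$WL$ matches the target $\tfrac{2(N+k)}{3k}(\log_2 k+\log_2\log_2 k+4)$. The coefficient~$\tfrac{2}{3}$ tracks the ratio $n'/m'\le\tfrac{2}{3}$ forced by minimum degree~$3$; the primary $\log_2 k$ factor in~$L$ comes from the Moore bound applied to~$H_W^{\ast}$ with at most $2k$ vertices; and the secondary $\log_2\log_2 k+4$ slack should absorb both the additive constants in the Moore bound and the tuning of~$W$ against the cyclomatic and minimum-degree losses during pruning. I expect the most delicate point to be controlling precisely how many edges and vertices are discarded in passing from~$H_W$ to~$H_W^{\ast}$, since this determines the effective average degree feeding into the Moore bound, and in turn the optimal choice of~$W$.
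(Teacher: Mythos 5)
The paper does not prove this statement; it is quoted verbatim as Theorem~1 of Bollob\'as and Szemer\'edi~\cite{BS2002}. So there is no proof in the paper to compare against, and your proposal should be judged on its own.

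Your first stage is fine and is in fact the standard opening move: pass to the $2$-core, suppress degree-$2$ vertices while recording weights, and obtain a weighted multigraph $H$ with $n'\le 2k$ vertices, $m'=n'+k\le 3k$ edges, minimum degree~$\ge 3$, and total weight at most $N+k$, with cyclomatic number preserved. But the second stage has a genuine gap, and it is precisely at the spot you flag as delicate. After thresholding at weight $W$ you prune $H_W$ down to a subgraph $H_W^{\ast}$ of minimum degree~$\ge 2$ and propose to apply ``a Moore-type bound.'' There is no Moore-type bound for minimum degree~$2$: a minimum-degree-$2$ graph can be a single long cycle, whose girth equals its number of vertices, so nothing logarithmic comes out. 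To get exponential ball growth you need minimum degree~$\ge 3$, and the only way to reach that from $H_W$ is to suppress the degree-$2$ vertices created when the heavy edges were removed. But suppression merges chains of light edges into single edges whose weights are no longer bounded by~$W$; removing $t\le (N+k)/W$ heavy edges can create up to $2t$ degree-$2$ vertices, and a suppressed path through them can have weight as large as $(2t+1)W\approx 2(N+k)$, which destroys the per-edge weight control your final estimate $WL$ relies on. So the argument does not close: you cannot simultaneously keep minimum degree~$\ge 3$ (needed for the logarithmic Moore bound) and keep every edge of weight~$\le W$ (needed to convert ``$L$ edges'' into ``length~$\le WL$''). Closing this requires a different second stage --- for instance a BFS/ball-growing argument carried out directly in the weighted multigraph $H$, charging heavy edges against the total weight as the ball expands rather than discarding them up front --- which is essentially what Bollob\'as and Szemer\'edi do.
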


We shall use two well-known concentration inequalities.
\begin{theorem}[Chernoff]\label{thm:chernoff}
Let $X$ be a binomial random variable $\Bin(n,p)$. %For any $0<\epsilon<1$, we have
%\[\cPr(X\ge (1+\epsilon)\E X)\le \exp(-\epsilon^2\E X/3). \]
For any $t\ge 0$, we have
\[\cPr(X\ge \E X + t)\le \exp\Big(-\frac{t^2}{2(\E X+t/3)}\Big). \]
\end{theorem}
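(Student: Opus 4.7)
The plan is the classical exponential-moment (Chernoff) argument followed by a short convexity estimate. Write $\mu=\E X=np$. For every $\lambda>0$, Markov's inequality applied to $e^{\lambda X}$ gives
\[\cPr(X\ge \mu+t)\le e^{-\lambda(\mu+t)}\,\E\bigl[e^{\lambda X}\bigr].\]
Since $X$ is a sum of $n$ independent Bernoulli$(p)$ variables, $\E[e^{\lambda X}]=(1-p+pe^{\lambda})^n$, and the elementary inequality $1+y\le e^{y}$ with $y=p(e^{\lambda}-1)$ bounds this by $\exp\bigl(\mu(e^{\lambda}-1)\bigr)$.

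Next, substituting and optimizing in $\lambda$ by the choice $\lambda=\ln(1+t/\mu)$ (admissible since $t\ge 0$) yields the standard Bennett form
\[\cPr(X\ge \mu+t)\le \exp\bigl(-\mu\, h(t/\mu)\bigr),\qquad h(x):=(1+x)\ln(1+x)-x.\]
The stated inequality will follow once one proves the one-variable comparison
\[h(x)\ge \frac{x^{2}}{2(1+x/3)},\qquad x\ge 0,\]
because multiplying by $\mu$ and substituting $x=t/\mu$ then produces $\mu\,h(t/\mu)\ge \frac{t^{2}}{2(\mu+t/3)}$, giving precisely the bound in the theorem.

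The only real step, such as it is, is this last inequality, and it is the main (minor) obstacle. I would prove it by setting $F(x):=2(1+x/3)h(x)-x^{2}$ and computing, using $h'(x)=\ln(1+x)$ and $h''(x)=1/(1+x)$, that $F(0)=F'(0)=F''(0)=0$ while
\[F'''(x)=\frac{4x}{3(1+x)^{2}}\ge 0\qquad\text{on }[0,\infty).\]
Three successive integrations from $0$ then give $F(x)\ge 0$ for $x\ge 0$, which is what we need. Everything else is the textbook Chernoff/Markov template, so no genuine obstacle arises.
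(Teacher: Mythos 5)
Your proof is correct. The paper does not prove \refT{thm:chernoff}; it states it as a standard concentration tool and applies it without derivation, so there is no "paper proof" to compare against. Your argument is the textbook exponential-moment route: Markov's inequality applied to $e^{\lambda X}$, the bound $\E[e^{\lambda X}] \le \exp(\mu(e^\lambda-1))$ via $1+y\le e^y$, the optimal choice $\lambda=\ln(1+t/\mu)$ giving the Bennett form $\exp(-\mu h(t/\mu))$, and finally the comparison $h(x)\ge x^2/(2(1+x/3))$. I checked the last step: with $F(x)=2(1+x/3)h(x)-x^2$ one indeed gets $F(0)=F'(0)=F''(0)=0$ and $F'''(x)=\frac{4x}{3(1+x)^2}\ge 0$, so $F\ge 0$ on $[0,\infty)$ by integrating three times. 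One tiny point worth stating explicitly: at $t=0$ the optimal $\lambda$ is $0$ and both sides of the claimed inequality equal $1$, so the case $t=0$ is trivially fine and the Markov step is only genuinely used for $t>0$ (where $\lambda>0$). This doesn't affect correctness.
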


\begin{theorem}[Chebyshev]\label{thm:chebyshev}
Let $X$ be a random variable. %For any $\epsilon>0$, we have
%\[\cPr(|X -\E X|\ge \epsilon \E X)\le \Var X/(\epsilon\E X)^2, \]
For any $t>0$, we have
\[\cPr(|X -\E X|\ge t)
\le \frac{\Var X}{t^2}, \]
where $\Var X$ is the variance of $X$.
\end{theorem}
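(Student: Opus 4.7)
The plan is to reduce Chebyshev's inequality to Markov's inequality applied to the non-negative random variable $(X - \E X)^2$. First I would observe the elementary event identity
\[\{|X - \E X| \ge t\} = \{(X - \E X)^2 \ge t^2\},\]
valid because $t > 0$ (so squaring preserves the inequality), which lets me rewrite the probability on the left-hand side of the claim as $\cPr\bigl((X - \E X)^2 \ge t^2\bigr)$.

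Next I would invoke Markov's inequality: for any non-negative random variable $Y$ and any $s > 0$, one has $\cPr(Y \ge s) \le \E Y / s$. Markov itself I would justify from the pointwise bound $s \cdot \indic{Y \ge s} \le Y$ (which holds because $Y \ge 0$) by taking expectations on both sides. Applying it with $Y = (X - \E X)^2$ and $s = t^2$ gives
\[\cPr\bigl((X - \E X)^2 \ge t^2\bigr) \le \frac{\E\bigl[(X - \E X)^2\bigr]}{t^2} = \frac{\Var X}{t^2},\]
where the last equality is just the definition of variance. Chaining the two displayed steps yields the claim.

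There is essentially no obstacle: the argument is two lines, and the only tacit assumption is that $\Var X$ is finite (otherwise the bound is vacuous). No hypothesis on the distribution of $X$ is required, which is exactly the feature that makes Chebyshev a convenient black-box tool for the concentration arguments appearing later in the paper, where $X$ will typically be a count of some random substructure whose second moment is easier to control than its tail.
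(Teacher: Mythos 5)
Your proof is correct and is the standard derivation of Chebyshev from Markov applied to $(X-\E X)^2$; the paper merely states this as a well-known tool without proof, so there is nothing to compare against.
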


\section{Main results}

\subsection{Matchings and single edges}
\begin{theorem}\label{thm:weaker}
    For any~$\alpha>1/2$, there exists $C$ such that for any edge coloring $\cF=(F_1,\dots, F_n)$ of an $n$-vertex graph~$G$ with $F_i\neq\emptyset$ for each $i\in [n]$, if at least $\alpha n$ color classes in~$\cF$ are matchings of size 2, then $rgirth(\cF)\le C\log n$. 
\end{theorem}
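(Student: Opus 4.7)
The approach extends the method of~\cite{AharoniGuo} for the all-matchings case, combining~\refT{thm:girthsparsegraph} with a random contraction. As a preliminary reduction, I would replace each non-matching colour class by an arbitrary single edge from it; the rainbow girth of the reduced family bounds that of the original, so we may assume $\cF$ consists of $\alpha n$ matchings of size~$2$ and $(1-\alpha)n$ single-edge classes. The host graph $G$ then has $n$ vertices and $(1+\alpha)n$ edges; crucially, $\alpha>1/2$ ensures that the edge surplus $\alpha n$ over the vertex count is linear.

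Next, for each matching $M_i=\{e_i,f_i\}$ independently, I would choose one of the two edges uniformly at random and \emph{contract} it in $G$, retaining the other edge as well as all single-edge classes. Call the resulting random multigraph $H$. By construction every colour class contributes exactly one edge to $H$, so any cycle in $H$ uses each colour at most once. In the favourable case that the matchings are vertex-disjoint, $H$ has $(1-\alpha)n$ vertices and $n$ edges, leaving a linear edge surplus of $\alpha n$, and~\refT{thm:girthsparsegraph} produces a cycle $\tilde C$ in $H$ of length $L=O(\log n)$.

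I would then \emph{lift} $\tilde C$ to a closed walk in $G$ by undoing the contractions: wherever $\tilde C$ passes through a contracted super-vertex $z_i$ via the two distinct endpoints of the contracted edge $c_i$, I insert $c_i$ to bridge. The lifted walk has length at most $2L$. The lift is rainbow provided no inserted $c_i$ coincides in colour with an edge already in $\tilde C$, that is, provided $c_i$ and the retained matching partner $k_i:=M_i\setminus\{c_i\}$ are not both in the cycle. Since the choice of $c_i$ vs.\ $k_i$ was an independent unbiased coin flip, a first-moment union bound over the $O(\log n)$ matchings visited by $\tilde C$ yields, with positive probability, a conflict-free lift and hence a rainbow cycle of length $O(\log n)$.

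The main obstacle is the non-vertex-disjoint case: when matchings share vertices, random contractions can produce loops and parallel edges, eroding the edge surplus of $H$ and threatening the applicability of~\refT{thm:girthsparsegraph}. To handle this I plan to apply Chebyshev's inequality (\refT{thm:chebyshev}) to the number of ``lost'' contractions to show that, with positive probability, only a sublinear fraction of matchings suffer loss, so that $H$ still has linear surplus; the assumption $\alpha>1/2$ is what keeps the remaining surplus strictly positive. A secondary subtlety is ensuring the lifted closed walk is an honest (simple) cycle rather than a walk revisiting a vertex; this should follow generically from the shortness of $\tilde C$ and can be enforced by taking $\tilde C$ to be the shortest cycle in $H$ and, if necessary, passing to a shorter sub-cycle of the lift.
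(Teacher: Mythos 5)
Your approach is genuinely different from the paper's. The paper proves Theorem~\ref{thm:weaker} (via Theorem~\ref{thm:main}) by choosing a random induced subgraph: include each vertex in $S$ independently with probability $p$ close to $1$, and show that the expected number of colours contributing a rainbow edge inside $G[S]$ exceeds $\E|S|$ by a linear margin; the inequality $\alpha(2p^2-p^4)+(1-\alpha)p^2>p$, valid for $p$ near $1$ precisely when $\alpha>1/2$, is where the threshold enters. You propose instead a random \emph{contraction} of one edge per matching followed by a lift. That is an interesting alternative route, but as written it contains a gap.

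The gap is in the lifting step, and it is not the "non-vertex-disjoint'' technicality you flag but the conflict analysis itself. First, notice that your accounting of the surplus in $H$ (namely $n$ edges on $(1-\alpha)n$ vertices, surplus $\alpha n$) is positive for \emph{every} $\alpha>0$, not just $\alpha>1/2$; so if the lifting argument went through, your method would prove an $O(\log n)$ rainbow girth for all $\alpha>0$, which is false. The paper's sharpness construction for $\alpha=1/2$ (the cyclically linked $4$-cycles $v_{i,1}v_{i,2}v_{i,3}v_{i,4}$ with $\{v_{i,1}v_{i,2},v_{i,3}v_{i,4}\}\in\cQm$ and $v_{i,2}v_{i,3},v_{i,4}v_{i,1}\in\cQe$) makes the failure concrete: whichever edge of the inner matching you contract, the block collapses to a triangle in $H$ consisting of the two single-edge colours plus the \emph{retained} matching edge, and lifting that triangle forces you to re-insert the \emph{contracted} edge of the same matching, creating a repeated colour with probability $1$. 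Second, the "first-moment union bound'' is circular: the short cycle $\tilde C$ is only found \emph{after} the contraction choices are made, so the set of matchings it visits is a function of those choices and the events you wish to union over are not independent of (or even well-defined before) the coin flips. One would need to analyse the joint distribution of the contraction and the shortest cycle of $H$, and the example above shows this joint event can fail almost surely. The Chebyshev step you propose addresses only the erosion of the edge surplus, not this colour-collision problem, which is the actual obstruction and the actual place where $\alpha>1/2$ must be used.
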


We shall need a slightly stronger result, which allows the number of size-two matchings to be 
less than $\alpha n$ and the total number of color classes to be less than~$n$.

\begin{theorem}\label{thm:main}
For any~$\alpha>1/2$, there exist  $\cxi = \cxi(\alpha)>0$ and $C=C(\alpha)$ such that 
the following holds. Let $G$ be an $n$-vertex graph and~$\cF=(F_1,\dots, F_m)$ be an edge coloring of $G$. 
If~$\cF= \cQm 	\sqcup \cQe$, where 
\begin{enumerate}
  \item   every $F_i\in \cQm$ is a matching of size 2, 
  \item   every $F_i\in \cQe$ is a single edge,
  \item $|\cQm|\ge (\alpha-\cxi)n$ and $|\cQe|\ge(1-\alpha-\cxi)n$,
\end{enumerate}
then $rgirth(\cF)\le C\log n$.
\end{theorem}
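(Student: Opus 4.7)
The plan is to construct a random subgraph $H$ of $G$ by including every edge of $\cQe$ together with one edge chosen uniformly and independently from each matching $F_i \in \cQm$. Since each color class contributes at most one edge to $H$, every cycle of $H$ is automatically rainbow with respect to $\cF$, so it suffices to locate a short cycle in $H$ via \refT{thm:girthsparsegraph}. Recall that for that theorem to give an $O(\log n)$ bound, $H$ must have $N$ non-isolated vertices and at least $N+k$ edges with $k=\Omega(N)$.

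I would split the argument into two cases according to the total number of color classes $m = |\cQm|+|\cQe|$. In the \emph{dense} case $m \ge (1+\epsilon)n$ (for a small fixed $\epsilon>0$ to be chosen), the graph $H$ has $m$ edges on at most $n$ vertices deterministically, so the excess over the non-isolated vertex count is already at least $\epsilon n = \Omega(|V(H)|)$, and \refT{thm:girthsparsegraph} applies directly.

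The core of the proof is the \emph{sparse} case $m < (1+\epsilon)n$. Here $|\cQe| \le (1-\alpha+\epsilon+\xi)n$, so the set $S_0$ of vertices incident to no single edge satisfies
\[
|S_0| \ge n - 2|\cQe| \ge (2\alpha - 1 - 2\epsilon - 2\xi)n \ge \beta n
\]
for some $\beta=\beta(\alpha)>0$, provided $\epsilon,\xi$ are small enough; this is where the hypothesis $\alpha>1/2$ enters. For $v \in S_0$, the event that $v$ is isolated in $H$ is precisely that each of the $d_M(v)$ matchings in $\cQm$ containing $v$ selects its other edge, an event of probability $2^{-d_M(v)}$. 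Using $\sum_{v \in S_0} d_M(v) \le \sum_v d_M(v) = 4|\cQm| \le 4(1+\epsilon)n$ together with the convexity of $t \mapsto 2^{-t}$ (Jensen), one obtains
\[
\E[\text{number of isolated vertices in }H] = \sum_{v \in S_0} 2^{-d_M(v)} \ge |S_0| \cdot 2^{-4|\cQm|/|S_0|} \ge \beta n \cdot 2^{-4(1+\epsilon)/\beta} = c_1 n
\]
for some constant $c_1=c_1(\alpha)>0$. Fixing any realization of $H$ meeting this expectation gives $|V(H)| \le (1-c_1)n$, while $|E(H)| = |\cQm|+|\cQe| \ge (1-2\xi)n$; choosing $\xi$ small enough relative to $c_1$ leaves the excess $k=|E(H)|-|V(H)| \ge (c_1/2)n = \Omega(|V(H)|)$, and \refT{thm:girthsparsegraph} produces a rainbow cycle of length $O(\log n)$.

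The main obstacle is the joint tuning of $\xi$ and $\epsilon$: a single $\xi = \xi(\alpha)$ must simultaneously keep $|S_0|$ linear in $n$, keep the Jensen exponent $4|\cQm|/|S_0|$ bounded by a constant depending only on $\alpha$, and leave a linear-in-$n$ surplus $c_1-2\xi$. These inequalities are comfortable to satisfy so long as $\alpha>1/2$, but they degenerate at $\alpha=1/2$, precisely at the threshold identified in the abstract, where $|S_0|$ may vanish and the first-moment argument breaks down. Notably, no concentration inequality is needed: a plain first-moment existence argument suffices to extract the realization of $H$ that feeds into \refT{thm:girthsparsegraph}.
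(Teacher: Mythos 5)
Your proposal is correct, and it takes a genuinely different route from the paper. Both of the paper's proofs of \refT{thm:main} (the main one via \refT{thm:maintechnical} and the simplified one in \refS{subsubsec:simple}) sample a random \emph{vertex} subset $S$ by including each vertex independently with probability $p$, compare $|S|$ with the number of rainbow edges in $G[S]$, and hinge on the algebraic inequality $\alpha(2p^2-p^4)+(1-\alpha)p^2>p$ for $p$ close to 1, which encodes $\alpha>1/2$ after a Taylor expansion at $p=1$. You instead build a random rainbow \emph{edge} set $H$ directly (all single edges, plus one uniformly random edge from each size-2 matching), and you win by showing $H$ has $\Omega(n)$ isolated vertices in expectation via Jensen applied to $t\mapsto 2^{-t}$. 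This is essentially the Krivelevich argument of \refT{thm:new} in the Appendix, extended to the mixed matching/single-edge setting: the single edges pin their $\le 2|\cQe|$ endpoints, but the $\ge (2\alpha-1-O(\epsilon+\cxi))n$ remaining vertices (those touched only by matchings) each become isolated with probability bounded below by a constant. The dense/sparse split on $m$ is the right move, since the hypotheses give no a priori upper bound on $|\cQm|$ and hence none on the Jensen exponent $4|\cQm|/|S_0|$; the dense case $m\ge(1+\epsilon)n$ is trivially handled by \refT{thm:girthsparsegraph}. Like the paper's simplified proof, yours needs only a first-moment argument and no concentration. One small thing worth stating explicitly: you are using that the color classes are pairwise disjoint (so $H$ really has $m$ distinct edges and is genuinely rainbow), which the paper justifies early on by noting that otherwise there is a rainbow digon. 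A minor advantage of your route is that it makes the role of $\alpha>1/2$ transparent and geometric ($|S_0|>0$), rather than buried in a polynomial inequality in $p$; a minor advantage of the paper's route is that the same random-vertex machinery carries over uniformly to the triangle cases.
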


\refT{thm:main} will follow from~\refT{thm:girthsparsegraph}, and the following:

 \begin{theorem}\label{thm:maintechnical}
 For any~$\alpha > 1/2$, there exist $\beta,c>0$ such that for any large enough $n$, given an $n$-vertex graph~$G$ and an edge coloring of~$G$ satisfying the assumption in~\refT{thm:main}, there exists  a 
 subset~$S$ of $V(G)$ of size at most $\beta n$ containing a rainbow edge set of size at least $(\beta+c)n$.
 \end{theorem}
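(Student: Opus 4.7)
The plan is to sample a random vertex subset $S\subseteq V(G)$ by including each vertex independently with probability $p\in(0,1)$, and then to recover a rainbow edge set inside $S$ by picking, for every color $F_i$ that has an edge in $G[S]$, one such edge as a representative. Writing $X_i=\indic{F_i\cap E(G[S])\neq\emptyset}$ and $X=\sum_i X_i$, the two edges of a size-two matching are vertex-disjoint and span four distinct vertices, so $\E X_i=2p^2-p^4$ for $F_i\in\cQm$ and $\E X_i=p^2$ for $F_i\in\cQe$. Combining with the hypotheses on $|\cQm|$ and $|\cQe|$,
\[
\E X \;\geq\; np^2\bigl(1+\alpha(1-p^2)\bigr)-O(\cxi n),\qquad \E|S|=pn.
\]

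The algebraic crux is to fix $p\in(0,1)$ so that $h(p):=p\bigl(1+\alpha(1-p^2)\bigr)>1$. Since $h(1)=1$ and $h'(1)=1-2\alpha<0$ whenever $\alpha>1/2$, such a $p$ exists; set $\eta:=h(p)-1>0$. Then $\E[X-|S|]\geq \eta pn-O(\cxi n)\geq \eta pn/2$ provided $\cxi$ is chosen small enough in terms of $\alpha$. Note that the boundary $\alpha=1/2$ is precisely where $h'(1)$ hits zero and the argument breaks, explaining the threshold in the hypothesis.

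I would next turn these expectations into concentrated statements. The size $|S|\sim\Bin(n,p)$ satisfies $|S|\leq(p+\epsilon)n$ with probability $1-o(1)$ by Chernoff (\refT{thm:chernoff}). For $X$ I would use Chebyshev (\refT{thm:chebyshev}); the covariance $\operatorname{Cov}(X_i,X_j)$ vanishes unless $F_i$ and $F_j$ share a vertex, and the main obstacle is that a single vertex might a~priori belong to $\Theta(n)$ color classes, making $\Var X=\Theta(n^2)$ and Chebyshev useless. To circumvent this I would truncate the high-degree vertices: let $V_{\text{high}}:=\{v\in V(G):\deg_G(v)>\log^2 n\}$, noting $|V_{\text{high}}|=O(n/\log^2 n)$ since $|E(G)|\leq 2|\cQm|+|\cQe|=O(n)$. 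Modify the sampling so that all of $V_{\text{high}}$ lies in $S$ deterministically while each vertex of $V(G)\setminus V_{\text{high}}$ is sampled with probability $p$. This inflates $\E|S|$ by only $o(n)$ and can only increase $X$, so the bound $\E[X-|S|]\geq \eta pn/3$ persists; meanwhile every random vertex belongs to at most $\log^2 n$ color classes, so the number of color pairs sharing a random vertex is $O(n\log^2 n)$, yielding $\Var X=O(n\log^2 n)=o(n^2)$ as required.

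A union bound then produces, with positive probability, both $|S|\leq(p+\epsilon)n$ and $X\geq\bigl((1+\eta)p-\epsilon-O(\cxi)\bigr)n$. Setting $\beta:=p+\epsilon$ and $c:=\eta p/3$, and choosing $\epsilon$ and $\cxi$ small enough in terms of $\alpha$, gives a set $S$ of size at most $\beta n$ which, via one representative edge per counted color, contains a rainbow edge set of size at least $(\beta+c)n$. The central difficulty of the plan is the variance control, and the high-degree truncation above is the device that resolves it.
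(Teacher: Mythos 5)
Your proposal is correct and follows the same strategy as the paper: random vertex sampling, a deterministic set of high-influence vertices forced into $S$ to make the covariance structure tractable, Chernoff for $|S|$, Chebyshev for $X$, and the same algebraic threshold inequality (your $h(p)=p(1+\alpha(1-p^2))>1$ is equivalent to the paper's $\alpha(2p^2-p^4)+(1-\alpha)p^2>p$ after dividing by $p$). The one genuine difference lies in the threshold defining the forced vertices: the paper declares a vertex heavy when it is incident to at least $(\epsilon^2/10^6)n$ rainbow edges, which makes the set of heavy vertices constant-sized and yields $\Var X = O(\epsilon^2 n^2)$ with a small but still $\Theta(n^2)$ constant, so the Chebyshev bound ends up being a small constant probability; you instead use the threshold $\log^2 n$, giving a forced set of size $O(n/\log^2 n)=o(n)$ and $\Var X = O(n\log^2 n)=o(n^2)$, so Chebyshev gives failure probability $o(1)$. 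Your version is marginally cleaner in that respect, at the cost of a forced set that grows with $n$ (still negligible). One small imprecision worth noting: you say ``$\operatorname{Cov}(X_i,X_j)$ vanishes unless $F_i$ and $F_j$ share a vertex,'' but after the truncation the relevant statement is that it vanishes unless they share a \emph{non-forced} vertex — sharing only forced vertices still gives independence (this is exactly the paper's computation in Cases I and II), and your counting of ``pairs sharing a random vertex'' implicitly uses this, so the argument goes through, but it would be worth stating explicitly.
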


   Once this is proved, \refT{thm:main} follows by applying \refT{thm:girthsparsegraph} with $N=\beta n$ and $k=c n$.

In~\refS{subsubsec:simple}, we give a simplified proof of~\refT{thm:main}. But we still think the proof of~\refT{thm:main} via~\refT{thm:maintechnical} involves some interesting techniques.

 The idea used to prove~\refT{thm:maintechnical} is choosing a random subset~$S$ of~$V(G)$ and considering the induced subgraph~$G[S]$. The crux of the argument is that the expected number of vertices $\E |S|$ is less than the expected number of rainbow edges in~$G[S]$, and their difference is linear in~$n$. Furthermore,  these two random numbers are concentrated around their expectations, which follows from the concentration inequalities in~\refS{sec:tools}.

Theorem 2.8 of~\cite{AharoniGuo} states that for any~$\gamma>3\sqrt{6}/8\approx 0.9186$ and $n$-vertex graph with an edge coloring with at least~$\gamma n$ colors, if each color class is a matching of size 2, then the rainbow girth is~$O(\log n)$.
For completeness, we add an alternative proof of this statement in the~\refApp{app:App}, which was observed by Michael Krivelevich.\footnote{The bound on $\gamma$ is a bit weaker, but enough for our later use.}
Note that in~\refT{thm:main} we can take~$\cxi(\alpha)>0$ arbitrarily small. In particular, if $\alpha> 3\sqrt{6}/8$, we can guarantee that $\alpha-\cxi > 3\sqrt{6}/8$. Therefore here we may assume that $\alpha\le 3\sqrt{6}/8$. 
Again since~$\cxi$ can be chosen arbitrarily small, we may assume that $\min\{\alpha-\cxi,1-\alpha-\cxi \}\ge 1/40$ so that $\min\{|\cQm|, |\cQe|\}\ge n/40$. And without loss of generality, we may assume that $\max\{|\cQm|, |\cQe|\}\le n$.

First we claim that for  $p\in (0,1)$ close enough to 1, we have  
\begin{equation}\label{eq:inqtotal}
    \alpha(2p^2-p^4)+(1-\alpha)p^2>p. 
\end{equation}
In fact, when $\alpha = 1/2+\delta$ for some $\delta>0$, the above is equivalent to 
\[(1/2+\delta)(2p-p^3)+(1/2-\delta)p>1. \]
Writing $p=1-\tau$, we have
\begin{align*}
    (1/2+\delta)(2p-p^3)+(1/2-\delta)p &= \frac{3}{2}p+\delta p -(\frac{1}{2}+\delta)p^3\\
   & = \frac{3}{2}(1-\tau)+\delta(1-\tau)-(\frac{1}{2}+\delta)(1-\tau)^3\\
    &=1+2\delta \tau +\tau^2(-3/2-3\delta+\tau/2+\delta\tau),
\end{align*}
which is greater than 1 for $\tau = \tau(\delta)>0$ small enough.

For any $p\in (0,1)$ close enough to 1  satisfying~\eqref{eq:inqtotal}, there exist constants $\epsilon(\alpha,p),\cxi(\alpha,p)>0$ small enough so that
\begin{equation}\label{eq:inqtrue}
    (1-\epsilon)(\alpha-\cxi)(2p^2-p^4)+(1-\epsilon)(1-\alpha-\cxi)p^2>p.
\end{equation}

Fix $1/2\le p <1$   and $\epsilon,\cxi>0$ that satisfy~\eqref{eq:inqtrue}.

A vertex~$v$ is called a \emph{heavy} 
vertex if there are at least $(\epsilon^2/10^6)n$ many rainbow edges, i.e., edges in~$(\cup\cQm)\cup (\cup\cQe)$, incident to it.  Let~$D$ be the set of all heavy vertices. Then
\begin{equation}\label{eq:boundonD}
    |D|\le (2\cdot 2|\cQm|+2|\cQe|)/((\epsilon^2/10^6)n )\le 6n/((\epsilon^2/10^6)n )\le  10^7/\epsilon^2.
\end{equation}
We construct a random vertex set
\[S:=D\cup (V\setminus D)_p \]
i.e., $S$ contains the set~$D$ of heavy vertices and includes each vertex of~$V\setminus D$ independently with probability~$p$.

\begin{lemma}\label{lemma:S}
With high probability\footnote{An event holds with high probability if the probability of that event tends to 1 as~$n$ tends to infinity.}, $|S|\le np+ n^{2/3}$.
\end{lemma}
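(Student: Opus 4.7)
The plan is to decompose $|S|$ into its deterministic part $|D|$ and its random part $X := |(V \setminus D)_p|$, which is distributed as $\Bin(n - |D|, p)$. Since~\eqref{eq:boundonD} bounds $|D|$ by the constant $10^7/\epsilon^2$, depending only on $\alpha$ and $\epsilon$, the expectation $\E|S| = |D| + (n-|D|)p$ lies within $O(1)$ of $np$, well below the target $np + n^{2/3}$ for all sufficiently large $n$.

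Hence the task reduces to standard concentration for the binomial $X$. For $n$ large enough that $|D| \le n^{2/3}/2$, the event $|S| > np + n^{2/3}$ is contained in $\{ X > \E X + n^{2/3}/2 \}$. Applying Chebyshev's inequality (\refT{thm:chebyshev}) with $\Var X = (n-|D|)p(1-p) \le n/4$ and $t = n^{2/3}/2$ bounds this probability by $O(n^{-1/3})$, which tends to $0$; alternatively, Chernoff's bound (\refT{thm:chernoff}) gives an exponentially small probability. Either way the conclusion follows.

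No genuine obstacle arises here: the lemma is essentially a sanity check that $|S|$ concentrates around its mean $\approx np$. The real work toward~\refT{thm:maintechnical} will instead go into establishing the matching lower bound on the expected number of rainbow edges inside $G[S]$, which by~\eqref{eq:inqtrue} must exceed $np$ by a linear-in-$n$ margin; the role of the heavy-vertex set $D$ (automatically placed in $S$) is to absorb the few vertices incident to too many rainbow edges, so that the contributions of the remaining vertices behave independently enough for the variance-based concentration to succeed there as well.
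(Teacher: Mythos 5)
Your proof is correct and follows essentially the same route as the paper's: decompose $|S|$ as $|D|+\Bin(n-|D|,p)$, note $|D|=O(1)$ by~\eqref{eq:boundonD}, and apply a standard tail bound to the binomial part. The paper uses Chernoff (\refT{thm:chernoff}) directly to get an $\exp(-n^{\Omega(1)})$ bound, whereas you observe that Chebyshev also suffices; both conclude the same thing.
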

\begin{proof}
Note that by construction, $|S|$ has the same probability distribution as $|D|+\Bin(n-|D|, p)$. Set $Z=\Bin(n-|D|, p)$. Applying Chernoff's bound (\refT{thm:chernoff}) and~\eqref{eq:boundonD}, we have
\[\cPr(Z \ge np+n^{2/3}-|D| )\le \cPr(Z\ge \E Z +n^{2/3}-|D|) \le  \exp(-n^{\Omega(1)}). \]
Therefore with high probability, $|S|\le |D|+Z\le np+ n^{2/3}$.
\end{proof}

\begin{lemma}
With probability at least 0.9, the number of color classes in~$\cQm$ that have at least one edge contained in~$S$ is at least $(1-\epsilon)|\cQm|\cdot (2p^2-p^4)$.
\end{lemma}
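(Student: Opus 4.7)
The plan is to use a second moment argument. For each $F_i \in \cQm$, let $X_i$ be the indicator of the event that at least one of the two edges of $F_i$ is contained in $S$, and set $Y = \sum_{F_i \in \cQm} X_i$. First I would compute $\E X_i$. Write $F_i = \{e,e'\}$ with $e = \{a,b\}$, $e' = \{c,d\}$ on four distinct vertices; the vertices of $V(F_i) \cap D$ lie in $S$ deterministically, while the vertices of $V(F_i) \setminus D$ are put in $S$ independently with probability $p$. Hence $\cPr(e \subseteq S) \ge p^2$ and similarly for $e'$, and $\cPr(e \subseteq S \text{ and } e' \subseteq S) \le p^4$, so by inclusion--exclusion $\E X_i \ge 2p^2 - p^4$. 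Summing gives $\E Y \ge |\cQm|(2p^2 - p^4)$.

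Next I would bound $\Var Y$. The diagonal terms contribute at most $|\cQm| \le n$. For the off-diagonal terms, the key observation is that $X_i$ depends only on the random choices for vertices in $V(F_i) \setminus D$, so if $V(F_i) \cap V(F_j) \subseteq D$ then $X_i$ and $X_j$ are independent and contribute nothing. Thus only pairs $(F_i, F_j)$ sharing some vertex in $V \setminus D$ matter, and for each such pair $|\Cov(X_i, X_j)| \le 1$. Here the heavy vertex truncation enters decisively: every vertex $v \in V \setminus D$ is incident to at most $(\epsilon^2/10^6)n$ rainbow edges, hence to at most $(\epsilon^2/10^6)n$ color classes of~$\cF$. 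Since $F_i$ has at most $4$ vertices in $V \setminus D$, the number of $j$ with $V(F_i) \cap V(F_j) \not\subseteq D$ is at most $4(\epsilon^2/10^6)n$. Therefore
\[
\Var Y \le n + |\cQm| \cdot 4(\epsilon^2/10^6)n \le \tfrac{\epsilon^2}{10^5}\, n^2
\]
for $n$ large enough.

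Finally I would apply Chebyshev's inequality (\refT{thm:chebyshev}). The target lower bound $(1-\epsilon)|\cQm|(2p^2-p^4)$ is at most $(1-\epsilon)\E Y$, so the bad event is contained in $\{|Y - \E Y| \ge \epsilon \E Y\}$. Using $|\cQm| \ge n/40$ and $p$ bounded away from $0$, $\epsilon \E Y \ge \epsilon \cdot (n/40)(2p^2-p^4) = \Omega(\epsilon n)$, and so
\[
\cPr\bigl(Y < (1-\epsilon)|\cQm|(2p^2-p^4)\bigr) \le \frac{\Var Y}{(\epsilon \E Y)^2} \le \frac{(\epsilon^2/10^5) n^2}{\epsilon^2 n^2 (2p^2-p^4)^2/1600},
\]
which, since $2p^2 - p^4$ is a positive constant close to $1$, is well below $0.1$ for $n$ large. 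The main obstacle here is the variance bound, which fails without the heavy-vertex pruning: a high-degree vertex could contribute far too many correlated pairs $(X_i, X_j)$; absorbing such vertices deterministically into $S$ via the set $D$ is exactly what makes the pairwise correlations globally negligible.
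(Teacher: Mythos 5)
Your proof follows the paper's second-moment strategy (heavy-vertex pruning, variance bound, Chebyshev) and is essentially correct, but your treatment of the off-diagonal covariance terms is notably cleaner. The paper enumerates the possible shapes of $F_i\cup F_j$ (vertex-disjoint; an edge of one contained in the vertex set of the other, bounded separately by an $O(n)$ count; one shared vertex; two shared vertices) and, for the heavy-vertex subcases, verifies $\E X_iX_j=\E X_i\E X_j$ by expanding both sides term by term. You instead observe once and for all that $X_i$ is a function of the independent indicators $(\indic{v\in S})_{v\in V(F_i)\setminus D}$, so $X_i$ and $X_j$ are independent whenever $V(F_i)\cap V(F_j)\subseteq D$; this subsumes every case at once (including the degenerate ones), and the count of the remaining correlated pairs via the non-heavy degree bound is then immediate, yielding the same bound $\epsilon^2 n^2/10^5$ on the variance with slightly better constants than the paper's $12n+8\epsilon^2n^2/10^6$. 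One slip worth fixing: your claim $\cPr(e\subseteq S\text{ and }e'\subseteq S)\le p^4$ is backwards --- since some of the four vertices of $F_i$ may lie in $D$ and are then in $S$ deterministically, in fact $\cPr(e,e'\subseteq S)=p^{|V(F_i)\setminus D|}\ge p^4$. The conclusion $\E X_i\ge 2p^2-p^4$ is nevertheless correct, but by the monotonicity argument the paper uses: setting $q=\cPr(e\subseteq S)\ge p^2$ and $q'=\cPr(e'\subseteq S)\ge p^2$, disjointness of $e$ and $e'$ gives $\E X_i=1-(1-q)(1-q')$, which is increasing in each of $q,q'$ and hence is at least $1-(1-p^2)^2=2p^2-p^4$.
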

\begin{proof}
For $F_i\in \cQm$, let 
\[X_i:=\indic{\text{at least one edge in $F_i$ is contained in $S$} }\] 
be the indicator random variable that some edge~$e$ in~$F_i$ is contained in~$S$.
Since each vertex is included in~$S$ independently with probability at least~$p$ and~$X_i$ is an increasing random variable with respect to the probability that a vertex is included in~$S$, by inclusion-exclusion we have
\[ \E X_i \ge 2p^2-p^4. \] 
%\footnote{Another way to check $\E X_i\ge 2p^2-p^4$ is distinguishing how many vertices out of the four vertices of the color~$i$ edges are in~$D$ and note that $\E X_i \ge \min\{  2p^2-p^4, p+p^2-p^3,2p-p^2,1 \}\ge  2p^2-p^4$. The last inequality should hold for all $0\le p\le 1$, but not hard to check for $p$ close to 1 using the same method as~\eqref{eq:inqtotal}. }
Let 
\begin{equation}\label{eq:def:X}
    X:=\sum_{F_i\in \cQm} X_i.
\end{equation}
We have 
\begin{equation}\label{eq:EX}
 \E X \ge |\cQm|\cdot (2p^2-p^4).
\end{equation}
To prove the lemma, we shall apply Chebyshev's inequality (\refT{thm:chebyshev}). For this purpose we have to estimate $\Var X$. With a look at~\eqref{eq:def:X}, we have  
\begin{align}
    \Var X = \E X^2-(\E X)^2= \sum_{F_i,F_j\in \cQm} (\E X_iX_j- \E X_i\E X_j).
\end{align}

Note that if the matchings of the colors $i,j$ are vertex-disjoint, then $X_i$ and $X_j$ are independent and $\E X_iX_j- \E X_i\E X_j =0$.

Since the matchings in $\cQm$ are disjoint, for every $F_i\in \cQm$,  at most $6=\binom{4}{2}$  matchings $F_j\in\cQm$ can have an edge contained in  $\bigcup F_i$. This means that there exist at most $2\cdot 6n$ pairs $(F_i, F_j)$ such that an edge from $F_j$ is contained in $\bigcup F_i$, or vice versa. Thus the contribution of such pairs to $\Var X$ is at most $12n$.
%which is $o((\E X)^2)$ as $\E X=\Omega(n)$ (see~\eqref{eq:EX}).

Apart from vertex-disjointness, there are  two more possible forms of $ F_i \cup F_j$:

I. three connected components: one 2-path and two disjoint edges, or 

II.  two vertex-disjoint 2-paths.

Examine Case I. 
Let $F_i=\{a,b\}$, where $a=\{x,y\},~b=\{u,v\}$, and let $F_j=\{c,d\}$, where $c=\{x,z\}$ and $d=\{s,t\}$. 

If $x$ is a heavy vertex, then $\E X_iX_j-\E X_i\E X_j=0$.  Indeed, since the events $\{a\subseteq S\}=\{y\in S\}$, $\{b\subseteq S\}$, $\{c\subseteq S\}=\{z\in S\}$, and $\{d\subseteq S\}$ are mutually independent, we have
\begin{align*}
&\E X_i \E X_j \\
=&\Big(\cPr(a\subseteq S \text{ and }b\not\subseteq S)+\cPr(a\not\subseteq S \text{ and }b\subseteq S)+\cPr(a\subseteq S \text{ and }b\subseteq S)\Big)\\
&\cdot\Big(\cPr(c\subseteq S \text{ and }d\not\subseteq S)+\cPr(c\not\subseteq S \text{ and }d\subseteq S)+\cPr(c\subseteq S \text{ and }d\subseteq S)\Big)\\
=&\Big(\cPr(a\subseteq S)\cPr(b\not\subseteq S)+\cPr(a\not\subseteq S)\cPr(b\subseteq S)+\cPr(a\subseteq S)\cPr(b\subseteq S)\Big)\\
&\cdot \Big(\cPr(c\subseteq S)\cPr(d\not\subseteq S)+\cPr(c\not\subseteq S)\cPr(d\subseteq S)+\cPr(c\subseteq S)\cPr(d\subseteq S)\Big)\\
=&\cPr(\text{$a,c \subseteq S$ and $b,d \not \subseteq S$})+\cPr(\text{$a,d \subseteq S$ and  $b,c \not \subseteq S$})+\cPr(\text{$b,c \subseteq S$ and $a,d \not \subseteq S$})\\
&+\cPr(\text{$b,d \subseteq S$ and $a,c \not \subseteq S$})
+\cPr(\text{$a,b,c\subseteq S$ and $d\not\subseteq S$})+\cPr(\text{$a,b,d\subseteq S$ and $c\not\subseteq S$})\\
&+\cPr(\text{$a,c,d\subseteq S$ and $b\not\subseteq S$})+\cPr(\text{$b,c,d\subseteq S$ and $a\not\subseteq S$})+\cPr(\text{$a,b,c,d\subseteq S$})\\
=&\E X_iX_j.
\end{align*}

If $x$ is not a heavy vertex, the contribution to $\Var X$ for such $(F_i,F_j)$ is at most $4\cdot n\cdot \frac{\epsilon^2 n}{10^6}$, since there are at most $n$ ways to choose $F_i$, four ways to choose $x$, and at most $\frac{\epsilon^2 n}{10^6}$ ways to choose $F_j$.

In case II, let $F_i=\{a,b\}$, where $a=\{x,y\},~b=\{u,v\}$, and let $F_j=\{c,d\}$, where $c=\{x,z\}$ and $d=\{u,t\}$. 

If both $x$ and $u$ are heavy vertices, then the events $\{a\subseteq S\}=\{y\in S\}$, $\{b\subseteq S\}=\{v\in S\}$, $\{c\subseteq S\}=\{z\in S\}$, and $\{d\subseteq S\}=\{t\in S\}$ are mutually independent.
Similarly to case I, we have $\E X_iX_j-\E X_i\E X_j=0$.  Otherwise the contribution to $\Var X$ for such $(F_i,F_j)$ is at most $4\cdot n\cdot \frac{\epsilon^2 n}{10^6}$.

Summing, for $n$ large enough (so that $12\le \frac{2\epsilon^2}{10^6}n$) we have 
\[ \Var X\le 12n+2\cdot4\cdot n\cdot \frac{\epsilon^2 n}{10^6}\le \frac{\epsilon^2 n^2}{10^5}.   \]

Since $1/2\le p<1$, we have $2p^2-p^4\ge 7/16$.
Applying Chebyshev's inequality and using the assumption $|\cQm|\ge n/40$, we have
\begin{align*}
    \cPr(X \le (1-\epsilon)|\cQm|\cdot (2p^2-p^4))&\le \cPr(X\le \E X -\epsilon|\cQm|\cdot (2p^2-p^4)) \\
    &\le \frac{\Var X}{(\epsilon|\cQm|\cdot (2p^2-p^4))^2} \le \frac{\epsilon^2 n^2}{10^5 \epsilon^2(n/40)^2(7/16)^2}\le 1/10, 
\end{align*}
which completes the proof.
\end{proof}

\begin{lemma}\label{lemma:Y}
With probability at least 0.9, the number of color classes in~$\cQe$ that are contained in~$S$ is at least $(1-\epsilon)|\cQe|\cdot p^2$.
\end{lemma}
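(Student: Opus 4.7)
The plan is to adapt the proof of the preceding lemma, which is in fact simpler here since each $F_i\in\cQe$ is already a single edge. For each $F_i=\{e_i\}\in\cQe$, with $e_i=\{x_i,y_i\}$, set $Y_i:=\indic{e_i\subseteq S}$. Since every vertex of $V\setminus D$ lies in $S$ independently with probability $p$ and every vertex of $D$ lies in $S$ deterministically, we have $\E Y_i\ge p^2$, and hence $\E Y\ge|\cQe|\cdot p^2$, where $Y:=\sum_{F_i\in\cQe}Y_i$.

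It remains to bound $\Var Y=\sum_{F_i,F_j\in\cQe}(\E Y_iY_j-\E Y_i\E Y_j)$ and apply Chebyshev's inequality. If $e_i$ and $e_j$ are vertex-disjoint, independence gives a zero contribution. Diagonal terms ($i=j$) contribute at most $|\cQe|\le n$ in total. The remaining contributions come from ordered pairs $(F_i,F_j)$, $i\ne j$, whose edges share exactly one vertex $x$; write $e_i=\{x,y\}$ and $e_j=\{x,z\}$ with $y\ne z$. If $x$ is heavy then $x\in D\subseteq S$ always, so $Y_i=\indic{y\in S}$ and $Y_j=\indic{z\in S}$ are independent and the covariance vanishes, exactly as in Case~I of the previous lemma. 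If $x$ is not heavy then by definition $x$ is contained in at most $\epsilon^2n/10^6$ color classes of $\cQe$, so the number of such ordered pairs is at most $2n\cdot\epsilon^2n/10^6$, each contributing at most $1$. Altogether, for $n$ large enough, $\Var Y\le n+2\epsilon^2n^2/10^6\le 3\epsilon^2n^2/10^6$.

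Combining this with $p^2\ge 1/4$ and $|\cQe|\ge n/40$, Chebyshev's inequality (\refT{thm:chebyshev}) yields
\[\cPr\bigl(Y\le(1-\epsilon)|\cQe|\cdot p^2\bigr)\le\cPr\bigl(|Y-\E Y|\ge\epsilon|\cQe|\cdot p^2\bigr)\le\frac{\Var Y}{(\epsilon|\cQe|\cdot p^2)^2}\le\frac{1}{10},\]
as required. The one subtle point, inherited from the proof of the preceding lemma, is the use of the deterministic inclusion $D\subseteq S$ to force the covariance to vanish for pairs sharing a heavy vertex; bounding such covariances crudely by $1$ each would push the variance up to order $n^2$ and render Chebyshev useless.
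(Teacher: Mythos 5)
Your proof is correct and follows essentially the same route as the paper: identical definition of $Y_i$, the same decomposition of $\Var Y$ into diagonal, vertex-disjoint, heavy-intersection, and non-heavy-intersection terms, the same use of $D\subseteq S$ to kill covariances at heavy vertices, and the same degree bound for non-heavy vertices followed by Chebyshev. Your count of the problematic pairs ($2n\cdot\epsilon^2 n/10^6$ versus the paper's $4|\cQe|\cdot\epsilon^2 n/10^6$) differs by a harmless constant, and the final arithmetic with $|\cQe|\ge n/40$ and $p\ge 1/2$ checks out.
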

\begin{proof}
For $F_i\in \cQe$, let 
\[Y_i:=\indic{\text{$F_i\subseteq S$}}\]
be the indicator random variable that the edge of color~$i$ is contained in $S$.
Then $\E Y_i\ge p^2$.
Let \[Y:=\sum_{F_i\in \cQe}Y_i.\] We have 
\begin{equation}\label{eq:EY}
    \E Y\ge |\cQe|\cdot p^2.
\end{equation}
To prove the lemma, we shall apply Chebyshev's inequality (\refT{thm:chebyshev}). For this purpose we have to estimate $\Var X$. We have  
\begin{align}
    \Var Y = \E Y^2-(\E Y)^2= \sum_{F_i,F_j\in \cQe} (\E Y_iY_j- \E Y_i\E Y_j).
\end{align}
Note that if the edges of colors~$i,j$ are vertex-disjoint, then $Y_i$ and $Y_j$ are independent and $\E Y_iY_j- \E Y_i\E Y_j =0$. 

Furthermore, if the edges of two distinct color classes $F_i,F_j\in \cQe$, say $uv$ and $uw$, intersect at a heavy vertex~$u$ that is in~$D$, then
$\E Y_iY_j- \E Y_i\E Y_j = \cPr(v,w\in S)-\cPr(v\in S)\cPr(w\in S)=0 $.

Therefore besides the case $F_i=F_j$, the non-zero contribution to~$\E Y$ can only come from $F_i,F_j$ that the edges intersect at a non-heavy vertex. Note that by definition, such a non-heavy vertex is incident to at most~$(\epsilon^2/10^6) n$ edges in~$\cup\cQe$. Therefore there are at most $2\cdot |\cQe|\cdot 2 \cdot (\epsilon^2/10^6) n$ such pairs~$(F_i,F_j)$.

Therefore using the assumption that $|\cQe|\ge n/40$, for~$n$ large enough (so that $1\le \frac{4\epsilon^2}{10^6}n$) we have
\[\Var Y\le |\cQe|+4|\cQe| (\epsilon^2/10^6) n\le \frac{8\cdot 40 \epsilon^2}{10^6}|\cQe|\cdot \frac{n}{40}\le \frac{\epsilon^2}{10^3} |\cQe|^2.  \]
Therefore by Chebyshev's inequality and $p\ge 1/2$, we have
\[\cPr(Y\le |\cQe|\cdot p^2- \epsilon |\cQe|\cdot p^2) \le \cPr(Y\le \E Y- \epsilon |\cQe|\cdot p^2)\le \frac{\Var Y}{(\epsilon |\cQe|\cdot p^2)^2}\le  \frac{\epsilon^2|\cQe|^2}{10^3\cdot \epsilon^2 |\cQe|^2\cdot (1/2)^4}\le 1/10, \]
which completes the proof.
\end{proof}

\begin{proof}[Proof of~\refT{thm:maintechnical}]
Combining~\refL{lemma:S}--\ref{lemma:Y} and taking a union bound, we know that with positive probability (at least 1/2), all of the following hold:
\[|S|\le np+n^{2/3},\quad  X\ge (1-\epsilon)|\cQm|\cdot(2p^2-p^4),\quad \text{and } \quad Y\ge (1-\epsilon)|\cQe|\cdot p^2. \]
Therefore there is some~$S$ such that all of the above hold.
Then the number of rainbow edges contained in~$S$ is at least 
\begin{align*}
    X+Y &\ge (1-\epsilon)|\cQm|\cdot(2p^2-p^4)+(1-\epsilon)|\cQe|\cdot p^2\\
       &\ge (1-\epsilon)(\alpha-\cxi) n (2p^2-p^4) +(1-\epsilon)(1-\alpha-\cxi)n\cdot p^2.
\end{align*}
With a look at~\eqref{eq:inqtrue} and setting $3c:=(1-\epsilon)(\alpha-\cxi)(2p^2-p^4)+(1-\epsilon)(1-\alpha-\cxi)p^2-p>0 $, for~$n$ large enough we have that $|S|\le \beta n$ for $\beta:=p+c$ and the number of rainbow edges contained in~$S$ is at least $(\beta+c)n=\big( (p+3c)-c \big)n$. Therefore this completes the proof of~\refT{thm:maintechnical}. 
\end{proof}

\subsubsection{A simplified proof of~\refT{thm:main}}\label{subsubsec:simple}
Indeed, \refT{thm:main} will follow from the following theorem and~\refT{thm:girthsparsegraph} by setting $N=|S|\le n$ and $k=cn$.
\begin{theorem}
     For any~$\alpha > 1/2$, there exist $c>0$ such that for any large enough $n$, given an $n$-vertex graph~$G$ and an edge coloring of~$G$ satisfying the assumption in~\refT{thm:main}, there exists  a 
 subset~$S$ of $V(G)$ of such that the size of a rainbow edge set contained in~$S$ is at least $cn$ larger than~$|S|$.
\end{theorem}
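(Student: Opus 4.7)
The plan is to select $S$ as a random subset of $V(G)$ by including each vertex independently with probability $p$ close to $1$, and then compare $\E|S|$ with the expected size of a maximum rainbow edge set contained in $G[S]$. Because we need only the existence of one good $S$, and not concentration of $|S|$ or the rainbow-edge count around their means, the heavy-vertex preprocessing and Chebyshev-type arguments used for~\refT{thm:maintechnical} are entirely avoided; this is what the simplification buys us.

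First I would fix $p\in(0,1)$ close enough to $1$ so that inequality~\eqref{eq:inqtotal} holds, namely
\[\alpha(2p^2-p^4)+(1-\alpha)p^2>p,\]
whose validity for $\alpha>1/2$ is already verified in the excerpt. Then I would choose $\cxi>0$ small enough so that the perturbed inequality
\[(\alpha-\cxi)(2p^2-p^4)+(1-\alpha-\cxi)p^2>p\]
still holds, and let $c>0$ denote the resulting positive gap.

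Next I would compute the relevant expectations. Evidently $\E|S|=pn$. For a color class $F_i\in\cQm$, the two edges $a$ and $b$ are vertex-disjoint and therefore span four distinct vertices, so the events $\{a\subseteq S\}$ and $\{b\subseteq S\}$ are independent; by inclusion--exclusion, $G[S]$ contains at least one edge of $F_i$ with probability exactly $2p^2-p^4$. For $F_i\in\cQe$, the single edge lies in $G[S]$ with probability $p^2$. Picking one edge from every color class represented in $G[S]$ yields a rainbow edge set contained in $S$, so by linearity of expectation the expected size of a maximum such rainbow edge set is at least
\[(\alpha-\cxi)n(2p^2-p^4)+(1-\alpha-\cxi)np^2.\]
Subtracting $\E|S|=pn$, the expected excess of the rainbow-edge count over $|S|$ is at least $cn$ by the choice of $p$ and $\cxi$. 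By the first-moment method some deterministic $S\subseteq V(G)$ realizes this excess, and that $S$ satisfies the conclusion.

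There is no real obstacle here; the one mild subtlety is the order of parameter selection---choose $p$ first, then $\cxi$---which, as the analysis surrounding~\eqref{eq:inqtotal} in the excerpt already shows, can be done for every $\alpha>1/2$.
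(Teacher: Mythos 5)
Your proposal is correct and takes essentially the same route as the paper's own simplified proof: pick $S$ by including each vertex independently with probability $p$, compute $\E|S|=pn$ and the expected number of represented color classes via $2p^2-p^4$ (for $\cQm$) and $p^2$ (for $\cQe$), observe the expected gap is at least $cn$ by~\eqref{eq:inqtrue}, and conclude by the first-moment method. The only cosmetic difference is that you dispense with the $(1-\epsilon)$ slack factor appearing in~\eqref{eq:inqtrue}, which is indeed unnecessary once concentration is no longer needed.
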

\begin{proof}
    We take $p\in (0,1)$ and $\cxi>0$ satisfying~\eqref{eq:inqtrue} and set 
\[ c:=  (\alpha-\cxi)(2p^2-p^4)+(1-\alpha-\cxi)p^2-p>0.  \]    
 And we included each vertex of~$G$ into~$S$ independently with probability~$p$.
 Let $r_S$ be the largest size of a rainbow edge set contained in $S$.
By linearity of expectations and following similarly as the arguments in~\eqref{eq:EX} and~\eqref{eq:EY}, we have
\begin{align*}
    &\E(r_S-|S|)\\
    =& |\cQm|(2p^2-p^4)+|\cQe|p^2-pn\\
    \ge & \Big((\alpha-\cxi)(2p^2-p^4)+(1-\alpha-\cxi)p^2-p\Big)n\ge cn.
\end{align*}
 Therefore there exists an instance of $S$ such that the conclusion of the theorem holds.   
\end{proof}

\subsubsection{Sharpness of the condition~$\alpha>1/2$}
To get the logarithmic in~$n$ bound on the rainbow girth, it is necessary to assume $\alpha > 1/2$ in~\refT{thm:weaker} and~\refT{thm:main}. The following $n$-vertex graph~$F$ with~$n/2$ matchings of size 2 in~$\cQm$ and $n/2$ single edges in~$\cQe$ (so that $\alpha =1/2$) has rainbow girth linear in~$n$. 

For simplicity, we may assume that~$n \ge 8$ is divisible by 4.
The vertices of~$F$ are $v_{i,j}$ for $i=1,\dots, n/4$ and $j=1,\dots, 4$. For each $1\le i\le n/4$, the four vertices~ $v_{i,j}$ for $1\le j\le 4$ form a 4-cycle: $\{v_{i,1}v_{i,2}, v_{i,3}v_{i,4}\}$ is a matching of size 2 from~$\cQm$ and $v_{i,2}v_{i,3},v_{i,4}v_{i,1}$ are two edges in two colors from~$\cQe$.
And $\{v_{i,3}v_{i+1,2}, v_{i,4}v_{i+1,1}\}$ (with the first subscripts module $n/4$) for each $1\le i\le n/4$ is a matching of size 2 from~$\cQm$. It can be verified that the $n$-vertex graph~$F$ satisfies the assumption but has rainbow girth~$n/2$.

%which matches the bound by~\cite{Devos} (who proved that in an $n$-vertex graph with edge coloring of $n$ colors, if each color class has size at most 2, then the rainbow girth is at most $n/2$.)

\subsection{Triangles and single edges}
\begin{theorem}\label{thm:weakertrianglesedges}
    For any constant~$\alpha>0$,
   if an $n$-vertex graph~$G$ and edge coloring of~$G$ satisfying that at least $\alpha n$ color classes consisting of a triangle and at least $(1-\alpha)n$ color classes consisting of a single edge, then the rainbow girth is at most $\cL \log n$ for some constant $\cL(\alpha)>0$. 
\end{theorem}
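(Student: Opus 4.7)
The plan is to apply the Bollob\'as--Szemer\'edi theorem (\refT{thm:girthsparsegraph}) to a suitable spanning subgraph of $G$, and then convert the resulting short cycle into a rainbow cycle via a deterministic triangle short-cut operation; unlike the matchings case, no random sampling or concentration inequality is needed here.

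First, since every triangle color class contributes three edges and every single-edge class contributes one, $G$ has at least $3\alpha n+(1-\alpha)n=(1+2\alpha)n$ edges on $n$ vertices. The shortest cycle in $G$ may unavoidably be a non-rainbow \emph{color triangle} (a cycle formed by the three edges of a single triangle color class); to circumvent this, I let $G^\ast$ be the spanning subgraph obtained from $G$ by deleting one arbitrarily chosen edge from each triangle color class. Then $G^\ast$ has at least $2\alpha n+(1-\alpha)n=(1+\alpha)n$ edges on $n$ vertices, so \refT{thm:girthsparsegraph} applied with $N=n$ and $k\ge\alpha n\ge 2$ yields a cycle $C_0$ in $G^\ast$ of length at most $\cL(\alpha)\log n$ for some constant $\cL(\alpha)>0$. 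Since each triangle color class has only two of its three edges in $G^\ast$, the cycle $C_0$ cannot coincide with any color triangle.

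Next, I define a triangle short-cut on a cycle $C$ in $G$: if $C$ uses two edges $ab$ and $bc$ of a triangle color class $T=\{ab,bc,ca\}$, I replace the subpath $a,b,c$ in $C$ by the edge $ac$ (the third edge of $T$, which is present in $G$), yielding a cycle $C'$ of length $|C|-1$ that uses only the single edge $ac$ from color $T$. A short case check shows that $ac\notin C$ unless $C$ itself is the color triangle $abc$, and that if $C$ is not a color triangle then neither is $C'$: for instance, if a short-cut reduced a $4$-cycle to a $3$-cycle equal to the color triangle of some class $T'$, then $ac\in T\cap T'$ would force $T=T'$ to contain at least five distinct edges, which is impossible. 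Iterating the short-cut as long as some triangle class contributes two or more edges to the current cycle, each step strictly decreases the length by one and drops exactly one triangle's multiplicity from $2$ to $1$, without affecting any other color class (since the replacement edge lies only in $T$); single-edge classes trivially contribute at most one edge. The process therefore terminates at a rainbow cycle in $G$ of length at most $|C_0|=O(\log n)$.

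The main obstacle I anticipate is verifying the invariant that no intermediate cycle equals a color triangle; this is precisely what the passage from $G$ to $G^\ast$ achieves for $C_0$ and what the observation above preserves under each short-cut. The rest is a routine combinatorial check that each short-cut is well-defined, that it produces a simple cycle whenever $|C|\ge 4$, and that the iteration terminates in at most $|C_0|$ steps.
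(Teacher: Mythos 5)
Your proposal is correct and takes essentially the same approach as the paper: delete one edge from each triangle class, apply Bollob\'as--Szemer\'edi to the resulting graph on $(1+\alpha)n$ edges, then repeatedly short-cut two same-colored (triangle) edges through the third edge of that triangle. The paper leaves the ``shortened cycle never becomes a color triangle'' verification implicit; you spell it out, but the argument is the same one.
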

We can prove a slightly stronger result, which allows the total number of color classes to be less than~$n$.

\begin{theorem}\label{thm:trianglesedges}
    For any constants~$\alpha > 0$ and $\cxi <\alpha/3$,
   if an $n$-vertex graph~$G$ and edge coloring of~$G$ satisfying that at least $(\alpha-\cxi) n$ color classes consisting of a triangle and at least $(1-\alpha-\cxi)n$ color classes consisting of a single edge, then the rainbow girth is at most $\cL \log n$ for some constant $\cL(\alpha,\cxi)>0$. 
\end{theorem}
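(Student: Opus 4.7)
The plan is to skip the probabilistic reduction used for the matchings--and--edges case and apply \refT{thm:girthsparsegraph} directly to an auxiliary subgraph, using the extra edge hidden inside each triangle to shorten any non-rainbow short cycle that may arise. The hypothesis $\cxi < \alpha/3$ will feed in precisely to guarantee a linear edge surplus in the auxiliary subgraph.

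For each triangle color class $T_j$, choose arbitrarily two of its three edges, and define the auxiliary subgraph $H \subseteq G$ to consist of all such chosen triangle-edges together with every single-edge color class in~$\cF$. Then $H$ has at most $n$ vertices and
\[ |E(H)| = 2(\alpha-\cxi)n + (1-\alpha-\cxi)n = (1+\alpha-3\cxi)n, \]
so the hypothesis $\cxi < \alpha/3$ gives an edge surplus of at least $(\alpha-3\cxi)n > 0$ over the vertex count of $H$. Applying \refT{thm:girthsparsegraph} to $H$ bounds its girth by $\cL_1 \log n$ for some $\cL_1 = \cL_1(\alpha,\cxi)$. Let $C$ be a shortest cycle in $H$, so $|C| \le \cL_1\log n$.

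If $C$ is rainbow in $\cF$ we are done; otherwise $C$ must repeat some color class. Since each single-edge class contains only one edge, the repeated color must be some triangle~$T_j$, and $C$ uses both of the two $T_j$-edges placed in~$H$. Any two edges of a triangle share a common vertex, so $C$ passes through that vertex $v$ via edges $uv$ and $vw$ for some $u,w \in V(C)$. Replace the $2$-path through $v$ by the third edge $uw$ of $T_j$, which lies in $G$ even though it was excluded from $H$; the resulting $C'$ is a cycle in $G$ with $|C'|=|C|-1$ that uses exactly one edge of $T_j$ and leaves the edge-count of every other color class unchanged. Iterate whenever the current cycle still doubles some triangle color. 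Each step strictly decreases both the cycle length and the number of doubled color classes, so after finitely many steps we arrive at a rainbow cycle $C^*$ in $G$ with $|C^*| \le |C| \le \cL_1 \log n$.

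The delicate point is to verify that the iteration never collapses into a degenerate object, in particular never produces a monochromatic $3$-cycle (from which no further shortcut is possible and which is not itself rainbow). A monochromatic triangle appearing as $C^{(t+1)}$ would force $C^{(t)}$ to consist of only two edges, which is impossible; and the initial cycle $C \subseteq H$ is itself not monochromatic because $H$ retains only two edges from each triangle. Hence every intermediate cycle has length at least $3$, and the iteration terminates in a genuine rainbow cycle of length $O(\log n)$, as required.
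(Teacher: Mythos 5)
Your proposal is correct and follows essentially the same approach as the paper: form an auxiliary subgraph with two edges per triangle and all single edges, count $(1+\alpha-3\cxi)n$ edges, apply the Bollob\'as--Szemer\'edi bound, and iteratively replace a doubled pair of triangle edges by the third edge of that triangle. The extra care you take about degenerate $3$-cycles is a nice addition that the paper glosses over (and your reasoning about it is slightly loose but reaches a correct conclusion --- the cleaner observation is that a non-rainbow $3$-cycle with two edges from a triangle $T_j$ would force its third edge to coincide with the third edge of $T_j$, contradicting disjointness of color classes or making the cycle monochromatic, which the iteration cannot produce); this does not change the substance of the argument.
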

\begin{proof}
We take each of the single edge and two edges from each triangle. Then in the $n$-vertex graph, we have at least $2\cdot (\alpha -\cxi)n+(1-\alpha-\cxi)n = (1+\alpha-3\cxi)n$
    edges. Since $1+\alpha-3\cxi >1$, \refT{thm:girthsparsegraph} implies that there is a cycle of length at most $\cL\log n$ for some constant $\cL(\alpha,\cxi)>0$. If this cycle is not rainbow, we can replace two edges of the same color, which must come from the same triangle, by the other edge in the triangle to get a shorter cycle. Do it repeatedly until we obtain a rainbow cycle, which is of length at most~$\cL\log n$. This completes the proof. 
\end{proof}

\subsubsection{Sharpness of the condition $\alpha > 0$}
To get a logorithmic in~$n$ bound on the rainbow grith, we need $\alpha > 0$ in~\refT{thm:weakertrianglesedges} and~\refT{thm:trianglesedges}. Otherwise for $\alpha =0$, an $n$-cycle with~$n$ edges in distinct colors has rainbow girth~$n$.

\subsection{Matchings and triangles}

\begin{theorem}\label{thm:matchingtriangle}
There exists a constant $\cL>0$ such that for any $n$-vertex graph~$G$ and edge coloring of~$G$ with~$n$ colors, if each color class is either a matching of size 2 or a triangle, then the rainbow girth is at most $\cL\log n$. 
\end{theorem}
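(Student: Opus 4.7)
The plan is to adapt the simplified random-subset argument of \refS{subsubsec:simple} to the present mixed setting, where the non-matching color classes are triangles rather than single edges. Write $\cF = \cQm \sqcup \mathcal{T}$, with $\cQm$ the set of size-2 matching color classes and $\mathcal{T}$ the set of triangle color classes, and set $\alpha := |\cQm|/n$ so that $|\mathcal{T}| = (1-\alpha)n$. Include each vertex of $V(G)$ in a random set $S$ independently with probability $p=3/4$, and let $r_S$ denote the number of color classes of $\cF$ that have at least one edge entirely contained in $G[S]$.

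For a matching $F_i=\{ab,cd\}\in\cQm$ with four distinct endpoints, inclusion--exclusion gives $\cPr(F_i \text{ has an edge in } G[S]) = 2p^2 - p^4$. For a triangle $F_i\in\mathcal{T}$ on $\{a,b,c\}$, at least one edge of $F_i$ lies in $G[S]$ iff at least two of $a,b,c$ lie in $S$, which occurs with probability $3p^2-2p^3$. Linearity of expectations together with a direct factorization yields
\[
\E(r_S - |S|) = \alpha n(2p^2-p^4) + (1-\alpha)n(3p^2-2p^3) - pn = np(1-p)\bigl[(2p-1) - \alpha p(1-p)\bigr].
\]
At $p=3/4$ the bracket equals $\tfrac{1}{2}-\tfrac{3\alpha}{16}$, which is at least $\tfrac{5}{16}$ uniformly for $\alpha\in[0,1]$, so $\E(r_S-|S|)\geq \tfrac{15}{256}\,n$.

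Consequently some realization of $S$ satisfies $r_S \geq |S|+cn$ with $c=15/256$. For each of the $r_S$ contributing color classes select one edge lying in $G[S]$; the resulting edge set $R$ is rainbow (one edge per color) and is contained in $G[S]$. Applying \refT{thm:girthsparsegraph} to $(S,R)$ with $N=|S|\leq n$ and $k=r_S-|S|\geq cn$ produces a cycle of length $O(\log n)$, and this cycle is automatically a rainbow cycle of $\cF$ in $G$ because its edges come from distinct color classes.

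The main point to verify is the algebraic inequality $\alpha(2p^2-p^4)+(1-\alpha)(3p^2-2p^3)>p$ for some $p\in(0,1)$ uniformly in $\alpha\in[0,1]$; the computation above shows that $p=3/4$ works with a gap bounded below by an absolute positive constant, so no case split according to the ratio of matchings to triangles is needed and the constant $\cL$ in the statement is absolute. The simplicity compared to \refT{thm:main} reflects the fact that, per color class, a triangle is at least as likely as a size-2 matching to contribute an edge to $G[S]$, and both are strictly better than a single edge; this is why no lower bound on $\alpha$ is required here, whereas \refT{thm:main} needs $\alpha>1/2$.
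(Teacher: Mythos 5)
Your proof is correct, but it takes a genuinely different route from the paper's. The paper dispatches \refT{thm:matchingtriangle} by a two-case reduction: fix a threshold $\alpha_0>0$; if at least $\alpha_0 n$ color classes are triangles, discard one edge from each matching class and invoke \refT{thm:weakertrianglesedges}; otherwise more than $(1-\alpha_0)n>n/2$ classes are matchings and \refT{thm:weaker} applies directly. You instead give a single unified random-subset argument in the spirit of \refS{subsubsec:simple}: include each vertex in $S$ with a fixed probability $p=3/4$, compute the expected surplus $\E(r_S-|S|)$, and observe via the factorization $np(1-p)\bigl[(2p-1)-\alpha p(1-p)\bigr]$ that the surplus is at least $\tfrac{15}{256}n$ uniformly in $\alpha\in[0,1]$, with no case split. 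Your calculation is right (the inclusion-exclusion probabilities $2p^2-p^4$ and $3p^2-2p^3$ are correct, as is the algebra), and the final appeal to \refT{thm:girthsparsegraph} with $N=|S|\le n$ and $k=r_S-|S|\ge cn$ goes through, giving a rainbow cycle of length $O(\log n)$. Your approach buys a self-contained proof with an explicit absolute constant, and it isolates the conceptual reason no lower bound on the triangle proportion is needed: per color class, $3p^2-2p^3\ge 2p^2-p^4>p$ for suitable $p$, so both class types beat the single-edge benchmark $p^2$, whereas the paper's proof of this theorem leans on the machinery of \refT{thm:weaker} and \refT{thm:weakertrianglesedges} (and its "for some $\alpha>0$" phrasing is really shorthand for fixing such a threshold). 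The only minor point worth making explicit in your write-up is that, since $r_S\ge cn$ forces $|S|\ge 4$ for large $n$ and the color classes may be assumed disjoint (so $R$ is a simple graph), the hypotheses of \refT{thm:girthsparsegraph} are met.
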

\begin{proof}
This result follows immediately from~\refT{thm:weaker} and~\refT{thm:weakertrianglesedges}.
If the number of triangles is at least $\alpha n$ for some $\alpha>0$, then~\refT{thm:weakertrianglesedges} implies the result. Otherwise, the number of matchings of size 2 is at least $(1-o(1))n$, where~\refT{thm:weaker} implies the result.
\end{proof}

\bigskip{\noindent\bf Acknowledgements.} 

We thank Ron Aharoni for helpful suggestions during the preparation of this note.
We thank Michael Krivelevich for pointing out an alternative proof of a theorem that we cite.
We thank Lutz Warnke for meaningful discussion inspiring the simplified proof of~\refT{thm:main}.
And thanks for the comments from the referees.

\small

\normalsize

\appendix
\section{An alternative proof of rainbow girth for matchings}\label{app:App}
Michael Krivelevich pointed out the following result to the author in a personal communication.
\begin{theorem}\label{thm:new}
For any $\gamma>0$ satisfying $1-(\frac{1}{2})^{4\gamma}<\gamma$, the following holds:
    For an $n$-vertex graph $G$ with an edge coloring with at least~$\gamma n$ colors, if each color class is a matching of size 2, then the rainbow girth of $G$ is at most $C\log n$ for some $C(\gamma)>0$. 
\end{theorem}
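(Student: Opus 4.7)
The plan is to sample a random rainbow subgraph $G'$ of $G$ in such a way that $G'$ automatically has more edges than vertices, and then invoke Theorem~\ref{thm:girthsparsegraph}.

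The construction: for each color $c$, choose one of its two edges uniformly at random, independently across colors, and let $G'$ be the set of chosen edges. By construction $|E(G')| = \gamma n$ deterministically, and since exactly one edge per color is taken, \emph{every} cycle in $G'$ is rainbow. For each vertex $v$, let $d(v)$ denote the number of colors having an edge incident to $v$; since every color is a matching of size 2 with four distinct endpoints, $\sum_v d(v) = 4\gamma n$. A vertex $v$ fails to lie in $V(G')$ iff, for each of the $d(v)$ colors incident to $v$, the edge chosen from that color is \emph{not} the one at $v$; by independence across colors, this has probability $(1/2)^{d(v)}$.

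Concavity of $f(x) = 1 - (1/2)^x$ together with Jensen's inequality then gives
\[
\E|V(G')| \;=\; \sum_{v}\bigl(1 - (1/2)^{d(v)}\bigr) \;\le\; n\bigl(1 - (1/2)^{4\gamma}\bigr).
\]
Combined with $\E|E(G')| = \gamma n$, the hypothesis $1-(1/2)^{4\gamma} < \gamma$ yields $\E\bigl(|E(G')|-|V(G')|\bigr) \ge \delta n$ for some $\delta = \delta(\gamma) > 0$. Pick a realization attaining at least this excess: we obtain a rainbow subgraph on $N := |V(G')| \le n$ vertices with $N + k$ edges, $k \ge \delta n$. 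Theorem~\ref{thm:girthsparsegraph} then produces a cycle in $G'$ of length $O(\log n)$, which is automatically rainbow in $G$.

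The only real subtlety is the Jensen step: the right averaging quantity is the incidence degree $d(v)$ in the color-vertex bipartite graph, not the ordinary degree of $v$ in $G$, and one needs the exact incidence count $\sum_v d(v) = 4\gamma n$ (which uses that matchings of size 2 have four distinct endpoints). Given this, the rest is immediate.
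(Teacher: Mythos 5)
Your proof is correct and follows essentially the same argument as the paper's: pick one edge per color uniformly at random, observe the resulting subgraph has exactly $\gamma n$ rainbow edges, bound the expected number of vertices touched via Jensen applied to $\sum_v d(v)=4\gamma n$, and finish with Theorem~\ref{thm:girthsparsegraph}. The only cosmetic difference is that the paper phrases the Jensen step as a lower bound on the expected number of \emph{untouched} vertices using convexity of $(1/2)^x$, whereas you upper-bound the touched vertices using concavity of $1-(1/2)^x$; these are the same estimate.
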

\begin{proof}
    Let $\cQm$ be the collection of $\gamma n$ many size-2 matchings.  
    Let $H$ be the graph with vertex set $V(G)=[n]$ and edge set $\cup\cQm$.
    Independently we take one edge from each matching $F\in \cQm$ uniformly at random, and we set the chosen edge set as~$T$. Then 
    \[ |T|=\gamma n. \]
    Let $Z$ be the vertex subset $\{z\in V(G): z\not\in e \text{ for any $e\in T$}\}$. If we can show $Z\ge (1-\gamma')n$ for some constant $\gamma'<\gamma$. Then the edge set $T$ is contained in a vertex set of size at most $\gamma'n$. Then~\refT{thm:girthsparsegraph} implies there is a rainbow cycle in $T$ of length $O(\log n)$, which completes the proof. 

Note that for each vertex $v\in V(G)$, 
\[ \cPr(v\in Z)=\Big(\frac{1}{2}\Big)^{{d_H}(v)}, \]
i.e., none of edges in $H$ that are incident to $v$ is chosen into $T$.
    Therefore 
    \[\E |Z| =\sum_{v\in V(G)}\Big(\frac{1}{2}\Big)^{{d_H}(v)}\ge n \Big(\frac{1}{2} \Big)^{\sum_{v\in V(G)}d_H(v)/n}=n\Big(\frac{1}{2} \Big)^{4\gamma},\]
where the inequality is by the convexity of the function $f(x)=(1/2)^x$.
Therefore there exists a choice of $T$ such that 
\[  |Z| \ge n\Big(\frac{1}{2} \Big)^{4\gamma}. \]
Hence we can set $\gamma'=1-(\frac{1}{2} )^{4\gamma}$. By assumption we have $\gamma'<\gamma$, and this completes the proof.
\end{proof}
Though not necessary for proving the theorem, applying the bounded difference inequality, one can also show that $Z$ in the proof is concentrated around $\E Z$, since each choice of edge influences the result by at most 4.
\begin{remark}
    Numerical solution says for $\gamma\ge 0.922523266904828$, we have $1-(\frac{1}{2})^{4\gamma}<\gamma$ so that the conclusion of~\refT{thm:new} holds. 
\end{remark}

\end{document}